\newtheorem{theorem}{Theorem}
\newtheorem{lemma}{Lemma}
\newtheorem{proposition}{Proposition}
\newtheorem{corollary}{Corollary}
\newtheorem{question}{Question}
\newtheorem{definition}{Definition}
\newenvironment{proof}{{\bf Proof.}}{\hfill\rule{2mm}{2mm}}
\newtheorem{remarka}{Remark}
\def\newpic#1{}
\newlength{\cellwid}
\title{Measure preserving homomorphisms and independent sets in tensor graph powers}
\author
{%
Babak Behsaz
\thanks{Department of Computer Engineering and
Information Technology, Amirkabir University of Technology (Tehran
Polytechnic), 424 Hafez Ave., Tehran, Iran.
Email:behsaz@ce.aut.ac.ir.}
\and
Pooya Hatami\thanks{Department of Mathematical Sciences, Sharif
University of Technology, P.O.Box 11365-9415, Azadi Ave., Tehran,
Iran. Email:p\_hatami@ce.sharif.edu.}
}%
\date{}
\begin{document}

\maketitle
\begin{abstract}
In this note, we study the behavior of independent sets of maximum
probability measure in tensor graph powers. To do this, we
introduce an upper bound using measure preserving homomorphisms.
This work extends some previous results about independence ratios
of tensor graph powers.

\end{abstract}

%%%%%%%%%%%%%%%%%%%%%%%%%%%%%%%%%%%%%%%%%%%%%%%%%%%%%%%%%%%%%%%%%%%%%%%%%%%%%%%%%%%%%%%%%%%%%%
\section{Introduction}
The graphs in this note can have infinite number of vertices. A
\textit{homomorphism} from a graph $H$ to a graph $G$ is a map $h$
from the vertices of $H$ to the vertices of $G$ such that
$h(u)h(v)$ is an edge in $G$ for every edge $uv \in E(H)$. For
every graph $G$, we assume that there is a \textit{probability
measure} $\mu_G$ on the vertices of $G$. A homomorphism
$h:V(H)\rightarrow V(G)$ is \emph{measure preserving}, if $h$ is
measurable and for every measurable $S \subseteq V(G)$,
$\mu_H(h^{-1}(S))=\mu_G(S)$. By $H\rightarrow G$, we mean that
there exists a measure preserving homomorphism from $H$ to $G$.

\begin{definition}\label{def:vt}
Let $G$ be a graph with the probability measure $\mu_G$ on its
vertices. We call $G$ \emph{vertex transitive}, if
\begin{enumerate}
\item there exists a set $S$ of measure preserving homomorphisms
$\phi:V(G) \rightarrow V(G)$;

\item there exists a probability measure $\nu$ on $S$, such that
for almost every $v \in V(G)$, $\phi(v)$ has the same distribution
as $\mu_G$ when $\phi$ is chosen according to $\nu$.
\end{enumerate}
\end{definition}

Note that for a finite graph with the uniform measure, this
definition coincides with the known definition of vertex
transitivity of finite graphs (take $S$ to be the group of
automorphisms of $G$ with the uniform measure).

The \textit{tensor} product of two graphs, $G$ and $H$, has vertex
set $V(G)\times V(H)$, where $(u,v)$ and $(u',v')$ are adjacent if
and only if $uu'\in E(G)$ and $vv'\in E(H)$. The measure on the
new vertex set is the product measure. The characteristics of
tensor products of graphs have been studied extensively (for
example see \cite{brown96,zhu92}).

Let $G^n$ be the tensor product of $n$ copies of $G$. For a graph
$G$, define $\overline{\alpha}(G):= \sup_I {\mu_G}(I)$, where $I$
is a measurable independent set. It is easy to see that if
$H\rightarrow G$, then $\overline{\alpha}(H)
\ge\overline{\alpha}(G)$ and $H^n \rightarrow G^n$. Since $G^{i+1}
\rightarrow G^i$, this in particular implies that
$\overline{\alpha}(G^n)$ is a nondecreasing sequence, and $\lim_{n
\rightarrow \infty} \overline{\alpha}(G^n)$ exists. For a finite
vertex transitive graph $H$ with the uniform measure, it is known
that $\overline{\alpha}(H^n)= \overline{\alpha}(H)$
(see~\cite{alon04}). Now we prove an infinite version of this
fact:

\begin{lemma}
Let $H$ be a (possibly infinite) vertex transitive graph. Then for
any positive integer $n$,
$$\overline{\alpha}(H^n)= \overline{\alpha}(H).$$
\end{lemma}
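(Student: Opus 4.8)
The plan is to establish the two inequalities separately. The bound $\overline{\alpha}(H^n) \ge \overline{\alpha}(H)$ is already in hand: the introduction observes that $\overline{\alpha}(H^i)$ is nondecreasing in $i$ (because $H^{i+1}\to H^i$ and $A\to B$ implies $\overline{\alpha}(A)\ge\overline{\alpha}(B)$), so in particular $\overline{\alpha}(H^n)\ge\overline{\alpha}(H^1)=\overline{\alpha}(H)$. Thus all the work goes into proving $\overline{\alpha}(H^n)\le\overline{\alpha}(H)$. Fix an arbitrary measurable independent set $I\subseteq V(H^n)$; it suffices to show $\mu_{H^n}(I)\le\overline{\alpha}(H)$ and then take the supremum over $I$.

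The key device is a random homomorphism averaging argument, which is exactly what vertex transitivity is set up to provide. Let $S$ and $\nu$ be as in Definition~\ref{def:vt}, and draw $\phi_2,\dots,\phi_n$ independently according to $\nu$. Define the random subset
\[
  J \;:=\; \bigl\{\, v\in V(H)\;:\;(v,\phi_2(v),\phi_3(v),\dots,\phi_n(v))\in I \,\bigr\}\subseteq V(H).
\]
First I would check that $J$ is always an independent set in $H$: if $vv'\in E(H)$, then since each $\phi_i$ is a homomorphism we have $\phi_i(v)\phi_i(v')\in E(H)$ for every $i$, so the vertices $(v,\phi_2(v),\dots,\phi_n(v))$ and $(v',\phi_2(v'),\dots,\phi_n(v'))$ are adjacent in $H^n$; as $I$ is independent they cannot both lie in $I$, so $v$ and $v'$ are not both in $J$. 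Hence $\mu_H(J)\le\overline{\alpha}(H)$ for every realization of $\phi_2,\dots,\phi_n$.

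Next I would compute $\mathbb{E}[\mu_H(J)]$. Writing $\mu_H(J)=\int_{V(H)}\mathbf{1}[(v,\phi_2(v),\dots,\phi_n(v))\in I]\,d\mu_H(v)$ and applying Tonelli's theorem (the integrand is nonnegative and bounded),
\[
  \mathbb{E}\bigl[\mu_H(J)\bigr]\;=\;\int_{V(H)}\mathbb{P}\bigl[(v,\phi_2(v),\dots,\phi_n(v))\in I\bigr]\,d\mu_H(v).
\]
For almost every fixed $v$, the random variables $\phi_2(v),\dots,\phi_n(v)$ are independent (the $\phi_i$ are independent) and each is distributed according to $\mu_H$ (the defining property in Definition~\ref{def:vt}), so the tuple $(\phi_2(v),\dots,\phi_n(v))$ has law $\mu_{H^{n-1}}$; hence the inner probability equals $\mu_{H^{n-1}}(\{w:(v,w)\in I\})$, and integrating over $v$ against $\mu_H$ recovers $\mu_{H^n}(I)$ by Fubini. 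Combining with the previous paragraph, $\mu_{H^n}(I)=\mathbb{E}[\mu_H(J)]\le\overline{\alpha}(H)$, as desired.

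The main obstacle here is bookkeeping rather than ideas: one must fix a probability space carrying $\phi_2,\dots,\phi_n$, verify that $v\mapsto(v,\phi_2(v),\dots,\phi_n(v))$ is measurable (so $J$ is a measurable set, since each $\phi_i$ is measurable and $I$ is measurable) and that $\mu_H(J)$ is a measurable function of the $\phi_i$, and confirm that the ``for almost every $v$'' clause of Definition~\ref{def:vt} causes no trouble — its exceptional null set may depend on $i$, but there are only finitely many $i$, so the union remains null and the integral identities are unaffected. None of these steps uses finiteness of $H$, so the argument applies verbatim to infinite vertex transitive graphs.
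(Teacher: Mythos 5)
Your proof is correct and is essentially the same argument as the paper's: both use independent random measure-preserving homomorphisms applied to a single $\mu_H$-distributed vertex, so that the preimage of $I$ is an independent set of $H$ whose expected measure equals $\mu_{H^n}(I)$. The only (immaterial) differences are that you keep the first coordinate as the identity and so use $n-1$ homomorphisms instead of $n$, and you bound the expectation directly rather than selecting a favorable realization $\overline{\phi}_1,\dots,\overline{\phi}_n$; you are also more careful than the paper about the measurability and null-set bookkeeping.
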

\begin{proof}
Since $\overline{\alpha}(H^n) \ge \overline{\alpha}(H)$, it is
enough to prove that $\overline{\alpha}(H^n) \le
\overline{\alpha}(H)$. According to Definition~\ref{def:vt}, there
exists a probability measure $\nu$ on a set $S$ that together they
satisfy Definition~\ref{def:vt} (property $1$ and $2$). Consider
an arbitrary measurable independent set $I \subseteq H^n$ and for
a vertex $w \in H^n$ denote by $[w \in I]$ the function that is
$1$ if $w \in I$ and $0$ otherwise. Note that
\begin{equation*}
\mu_{H^n}(I) = \Pr_{v_i \in V(H)}[(v_1,\ldots,v_n) \in
I]=\Pr_{\phi_i \in S, v \in V(H)} [(\phi_1(v),\ldots,\phi_n(v))
\in I]
\end{equation*}
Thus, there exists a choice of
$\overline{\phi}_1,\ldots,\overline{\phi}_n$ such that
\begin{align*}
\mu_{H^n}(I) &\le \Pr_{v \in V(H)}
[(\overline{\phi}_1(v),\ldots,\overline{\phi}_n(v)) \in I]\\ &=
\mu(\{v: (\overline{\phi}_1(v),\ldots,\overline{\phi}_n(v)) \in I,
v\in V(H)\}).
\end{align*}
But $\{v: (\overline{\phi}_1(v),\ldots,\overline{\phi}_n(v)) \in
I\}$ is an independent set in $H$ because $I$ is an independent
set and $\{\overline{\phi}_i\}$ are homomorphisms. Thus we obtain
that $\mu_{H^n}(I) \le \overline{\alpha}(H)$ which completes the
proof.
\end{proof}

We call a vertex transitive graph $H$, a \textit{descriptor} of
$G$, if $H \rightarrow G$. Thus, for a descriptor $H$, we have
$$\overline{\alpha}(H)= \lim_{n \rightarrow
\infty}\overline{\alpha}(H^n)\ge\lim_{n \rightarrow
\infty}\overline{\alpha}(G^n).$$ Now, define $\mathrm{u}(G)$ as
below:
\begin{equation*}
\mathrm{u}(G)=\inf_{descriptor H}\overline{\alpha}(H).
\end{equation*}
Trivially, we have
\begin{equation}\label{ine:main}
\lim_{n \rightarrow \infty}\overline{\alpha}(G^n)\le\mathrm{u}(G).
\end{equation}

This raises the following question:
\begin{question}\label{que:main}
Does every finite graph $G$ satisfy $\lim\overline{\alpha}(G^n)
=\mathrm{u}(G)$?
\end{question}

This question is inspired by the work of Dinur and
Friedgut~\cite{friedgut05}, in which measure preserving
homomorphisms are used to give a new proof for an
Erd\"{o}s-Ko-Rado-type theorem.  We study the behavior of
$\lim\overline{\alpha}(G^n)$ for graphs with probability measures.
This is closely related and can be considered as the
generalizations of some results in~\cite{brown96}
and~\cite{alon06}.

\section{The results}
The following lemma is the generalization of a result of
\cite{brown96} to graphs with probability measures.
\begin{lemma}\label{lem:basic}
For every finite graph $G$, if $\lim \overline{\alpha}(G^n)
> \frac{1}{2}$, then $\lim \overline{\alpha}(G^n) = 1$.
\end{lemma}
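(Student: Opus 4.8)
The plan is to combine a recursive construction of large independent sets in tensor powers with a tensoring step that amplifies the resulting bound. For a graph $K$ and a vertex set $J$, let $N_K(J)$ denote the set of vertices of $K$ adjacent to at least one vertex of $J$. The key auxiliary fact is: for any finite graph $K$ carrying a probability measure and any independent set $J\subseteq V(K)$ with $\mu_K(J)>0$,
\[
\lim_{n\to\infty}\overline{\alpha}(K^n)\ \ge\ \frac{\mu_K(J)}{\mu_K(J)+\mu_K(N_K(J))}.
\]
To prove it, set $A=J$, $B=N_K(J)$ and $C=V(K)\setminus(A\cup B)$; since $J$ is independent there are no edges inside $A$, and by the definition of $C$ there are no edges between $A$ and $C$. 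Define $W_1=A$ and, for $n\ge 2$, $W_n=\{x\in V(K)^n: x_1\in A\}\cup\{x\in V(K)^n: x_1\in C \text{ and } (x_2,\dots,x_n)\in W_{n-1}\}$. An induction on $n$, using only these two ``no edge'' facts together with the definition of adjacency in a tensor product, shows each $W_n$ is independent in $K^n$; since the union is disjoint, the product measure gives $\mu(W_n)=\mu_K(A)+\mu_K(C)\,\mu(W_{n-1})$, so $\mu(W_n)\to \mu_K(A)/(1-\mu_K(C))=\mu_K(A)/(\mu_K(A)+\mu_K(B))$, using $\mu_K(C)<1$.

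Turning to the lemma, recall that $\overline{\alpha}(G^j)$ is nondecreasing in $j$ with limit exceeding $\frac{1}{2}$, so I would fix $m$ with $\overline{\alpha}(G^m)>\frac{1}{2}$ and an independent set $I\subseteq V(G^m)$ with $a:=\mu_{G^m}(I)>\frac{1}{2}$; put $b:=\mu_{G^m}(N_{G^m}(I))$, and note $b\le 1-a<a$ since $I$ and $N_{G^m}(I)$ are disjoint. For each $k\ge 1$ apply the auxiliary fact to $K:=G^{mk}=(G^m)^k$ and to the independent set $J:=I\times\cdots\times I$ ($k$ factors). Unwinding adjacency in the tensor product gives $N_K(J)=N_{G^m}(I)\times\cdots\times N_{G^m}(I)$, so $\mu_K(J)=a^k$ and $\mu_K(N_K(J))=b^k$. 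Since $(G^{mk})^n=G^{mkn}$ runs through a subsequence of the nondecreasing convergent sequence $(\overline{\alpha}(G^j))_j$, we have $\lim_n\overline{\alpha}((G^{mk})^n)=\lim_n\overline{\alpha}(G^n)$, and the auxiliary fact yields $\lim_n\overline{\alpha}(G^n)\ge a^k/(a^k+b^k)$ for every $k\ge 1$. As $b<a$, letting $k\to\infty$ forces $\lim_n\overline{\alpha}(G^n)\ge 1$, hence $\lim_n\overline{\alpha}(G^n)=1$.

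The one place needing a genuine idea is the amplification step: the bare construction only delivers $\lim\overline{\alpha}(G^n)\ge \mu(I)/(\mu(I)+\mu(N(I)))$, which can be as small as $\mu(I)$ and so need not exceed $\frac{1}{2}$ by much; replacing $I$ by the ``diagonal'' set $I\times\cdots\times I$ inside $G^{mk}$, whose neighbourhood measure $b^k$ is geometrically smaller than its own measure $a^k$, is precisely what pushes the bound up to $1$. Everything else — the induction that $W_n$ is independent, and the identification of $N_K(J)$ for a product set — is routine unwinding of the tensor-product definition, and the passage from $G^{mk}$ back to $G$ is just the remark that a subsequence of a convergent sequence has the same limit.
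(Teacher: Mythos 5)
Your proof is correct, but it takes a genuinely different route from the paper's. The paper fixes $H=G^i$ with an independent set $I$ of measure $\tfrac12+\epsilon$, takes $J\subseteq V(H^n)$ to be the set of vertices having strictly more than half their coordinates in $I$ (independent by pigeonhole, since two adjacent such vertices would share a coordinate where both lie in $I$), and invokes the weak law of large numbers to conclude $\mu_{H^n}(J)\to 1$. You instead first establish the layered recursive bound $\lim_n\overline{\alpha}(K^n)\ge \mu(J)/(\mu(J)+\mu(N(J)))$ --- which is exactly the construction the paper uses later in Lemma~\ref{lem:sufficiancy} --- and then amplify it by passing from $I$ to $I\times\cdots\times I$ in $G^{mk}$, using the correctly verified identity $N(I^k)=N(I)^k$ so that the ratio becomes $a^k/(a^k+b^k)\to 1$ when $b\le 1-a<a$. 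All the steps check out: $I^k$ is independent, the measure computation under the product measure is right, $\mu(C)<1$ guarantees convergence of the recursion, and $\overline{\alpha}(G^{mkn})$ is a subsequence of a nondecreasing convergent sequence. The paper's argument is shorter and leans on a standard concentration statement; yours is entirely elementary (a geometric series plus the tensor-power amplification) and has the added benefit of unifying this lemma with Lemma~\ref{lem:sufficiancy}: the same amplification applied under the weaker hypothesis $\mu(I)>\mu(N(I))$ proves that lemma directly, without routing back through the present one.
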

\begin{proof}
If $\lim \overline{\alpha}(G^n)>\frac{1}{2}$, then there exists a
positive integer $i$ such that
$\overline{\alpha}(G^i)>\frac{1}{2}$. By letting $H=G^i$,
trivially $\lim \overline{\alpha}(H^n)=\lim
\overline{\alpha}(G^n)$. Let $I$ be an independent set of measure
$\frac{1}{2}+\epsilon$ of $H$. Define $J\subseteq V(H^n)$ as the
set of vertices with strictly more than half coordinates in $I$.
Clearly, $J$ is an independent set of $H^n$. To prove that
$\overline{\alpha}(H^n) = 1$, it suffices to prove that as $n$
goes to infinity a random vertex which is taken from $H^n$ with
respect to $\mu_{H^n}$ is in $J$ almost surely. Let $X_i$ be an
indicator random variable, such that $X_i=1$ if the $i$th
coordinate of the random vertex belongs to $I$ and $X_i=0$
otherwise. As a result, we have $E[X_i] = \overline{\alpha}(H)$
and the mean and variance of $X_i$ is finite. Thus, by applying
the weak law of large numbers for the random variable $X =
\frac{1}{n}\sum_{i=1}^n X_i$, we obtain
$\lim_{n\rightarrow\infty}P(| X-\overline{\alpha}(H) | <
\epsilon')= 1$ for every positive real $\epsilon'$. Therefore, $X$
is greater than $\frac{1}{2}$ almost surely as desired.
% Therefore, in limit $X$ that shows portion of
% components in $J$, is greater than $\frac{1}{2}$ as desired.
\end{proof}

Now, we characterize the graphs for which $\lim
\overline{\alpha}(G^n) = 1$ and by using this, we present some
classes of graphs satisfying $\lim\overline{\alpha}(G^n)
=\mathrm{u}(G)$.
\begin{lemma}\label{lem:necessity}
For every finite graph $G$, if $\mathrm{u}(G)=1$ then there exists
an independent set $I\subseteq V(G)$ such that $\mu_G(I) >
\mu_G(N(I))$, where $N(I)$ is the set of the vertices in $V(G)$
that are adjacent to at least one vertex in $I$.
\end{lemma}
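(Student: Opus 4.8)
The plan is to prove the contrapositive, and slightly more: if no independent set $I$ satisfies $\mu_G(I)>\mu_G(N(I))$, I will show that $\mathrm{u}(G)\le\tfrac12$ (hence $<1$) by exhibiting one very simple descriptor. The argument has two essentially independent parts.

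\emph{Part 1: the hypothesis forces a fractional perfect matching.} I would first show that under the assumption, $G$ carries weights $w\colon E(G)\to[0,\infty)$ with $\sum_{e\ni v}w_e=\mu_G(v)$ for every $v$. Suppose not. Applying Farkas' lemma to the vertex--edge incidence matrix $M$ of $G$ with right-hand side $\mu_G$, there is $y\in\mathbb{R}^{V(G)}$ with $y_u+y_v\le 0$ for every edge $uv$ and $\sum_v\mu_G(v)\,y_v>0$. For $t\ge 0$ put $A_t:=\{v:y_v>t\}$; then $A_t$ is independent (two of its vertices have $y$-sum $>0$, hence are non-adjacent), and every vertex of $N(A_t)$ has $y$-value $<-t$. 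Writing the $\mu_G$-mean of $v\mapsto y_v$ in layer-cake form,
\[
0<\sum_v\mu_G(v)\,y_v=\int_0^\infty\bigl(\mu_G(A_t)-\mu_G(\{v:y_v<-t\})\bigr)\,dt\ \le\ \int_0^\infty\bigl(\mu_G(A_t)-\mu_G(N(A_t))\bigr)\,dt ,
\]
so $\mu_G(A_t)>\mu_G(N(A_t))$ for some $t\ge0$ --- contradicting the hypothesis. (This is the ``fractional Hall/K\"onig'' fact, and the argument is valid for arbitrary, possibly irrational, $\mu_G$.)

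\emph{Part 2: the descriptor.} From $w$ I would build $H=$ ``$[0,1)$ disjoint copies of $K_2$'': vertex set $[0,1)\times\{0,1\}$, edges $\{(s,0),(s,1)\}$ for $s\in[0,1)$, and $\mu_H=\mathrm{Leb}\otimes\mathrm{unif}\{0,1\}$. This $H$ is vertex transitive in the sense of Definition~\ref{def:vt}: take $S$ to consist of the maps $(s,i)\mapsto(s+t\bmod 1,\ i+j\bmod 2)$ with $(t,j)\in[0,1)\times\{0,1\}$ --- these are measure-preserving automorphisms of $H$ --- and let $\nu=\mathrm{Leb}\otimes\mathrm{unif}\{0,1\}$; a $\nu$-random such map sends any fixed vertex to a $\mu_H$-distributed vertex. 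Also $\overline{\alpha}(H)=\tfrac12$, since a measurable independent set meets each copy of $K_2$ in at most one point, while $[0,1)\times\{0\}$ attains $\tfrac12$. Finally, partition $[0,1)$ into measurable sets $I_e$ ($e\in E(G)$) with $\mathrm{Leb}(I_e)=2w_e$ --- possible because $\sum_e 2w_e=\sum_v\mu_G(v)=1$ --- and for $e=u_ev_e$ and $s\in I_e$ set $f(s,0)=u_e$, $f(s,1)=v_e$. Each copy of $K_2$ is mapped onto an edge of $G$, so $f$ is a homomorphism, and $\mu_H(f^{-1}(v))=\sum_{e\ni v}\tfrac12\mathrm{Leb}(I_e)=\sum_{e\ni v}w_e=\mu_G(v)$, so $f$ is measure preserving. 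Thus $H\to G$, $H$ is a descriptor, and $\mathrm{u}(G)\le\overline{\alpha}(H)=\tfrac12<1$.

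I expect the only genuinely delicate step to be Part 1 --- recognizing that the stated combinatorial condition is exactly feasibility of the fractional-perfect-matching polytope, and extracting the bad independent set from a Farkas certificate via the layer-cake identity. Part 2 is almost a tautology once one notices that a fractional perfect matching of $G$ is the same data as a measure-preserving homomorphism into $G$ from a ``continuum of disjoint edges''; incidentally this shows that $\mathrm{u}(G)<1$ always forces $\mathrm{u}(G)\le\tfrac12$, in line with the dichotomy behind Lemma~\ref{lem:basic}. The measurability bookkeeping for $H$, $S$, $\nu$ and $f$ is routine and I would only sketch it.
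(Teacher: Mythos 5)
Your proof is correct. It shares the paper's overall architecture --- under the contrapositive hypothesis you produce a fractional perfect matching and then read it off as a measure preserving homomorphism from a vertex transitive ``continuum of disjoint edges'', whose independence ratio is $\frac{1}{2}$, giving $\mathrm{u}(G)\le\frac{1}{2}$ --- but the existence step is handled by a genuinely different technique. The paper first upgrades the hypothesis to $\mu_G(Q)\le\mu_G(N(Q))$ for \emph{all} sets $Q$ (via an exchange argument), passes to the bipartite graph $G\times K_2$, and runs max-flow/min-cut on an auxiliary network to obtain a saturating flow; its descriptor then maps into $G\times K_2$ and is composed with the projection onto $G$. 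You instead apply Farkas' lemma directly to the vertex--edge incidence system of $G$ and convert a dual certificate $y$ into a violating independent set via the threshold sets $A_t=\{v:y_v>t\}$ and the layer-cake identity; this step checks out (two vertices of $A_t$ have positive $y$-sum so are non-adjacent, and every neighbour of $A_t$ has $y$-value below $-t$). Your route avoids both the auxiliary claim and the bipartite doubling, handles irrational measures with no extra effort, and lands a fractional perfect matching of $G$ itself rather than of its double cover; the paper's route is more combinatorial, using only max-flow/min-cut rather than LP duality. Your descriptor $[0,1)\times\{0,1\}$ is measure-isomorphic to the paper's $\bigl([0,1),\ \{a,a+\frac{1}{2}\}\bigr)$, and your verifications of vertex transitivity, of $\overline{\alpha}(H)=\frac{1}{2}$, and of measure preservation of $f$ are all sound. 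Your closing observation that $\mathrm{u}(G)<1$ in fact forces $\mathrm{u}(G)\le\frac{1}{2}$ is exactly what the paper's proof also yields.
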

\begin{proof}
Suppose that every independent set $I\subseteq V(G)$ satisfies
$\mu_G(I)\le \mu_G(N(I))$. We claim that for all $Q \subseteq
V(G)$, we have $\mu_G(Q) \le \mu_G(N(Q))$. Suppose that for a $Q
\subseteq V(G)$, we have $\mu_G(Q)
> \mu_G(N(Q))$. Let $I$ be the set of all vertices of $Q$ without
any neighbor in $Q$. Clearly, $I$ is an independent set and since
$\mu_G(Q) > \mu_G(N(Q))$, $I$ is nonempty. Let $Q'=Q \backslash
I$. Hence, $Q' \subseteq N(Q)$ and $N(I)\subseteq N(Q) \backslash
Q'$. Therefore,
$$\mu_G(N(I))\le \mu_G(N(Q)) - \mu_G(Q') <
\mu_G(Q) - \mu_G(Q') = \mu_G(I),$$ a contradiction.

Now let $G' = G\times K_2$, where $K_2=uv$ has the uniform
measure. It is clear that $X = \{(z,u)\in V(G'): z \in V(G)\}$ and
$Y = V(G') - X$ is a bipartition of $G'$. Consider a flow network
with vertices $V(G')\cup \{s, t\}$ and nonnegative capacities
$c(s,x) = \mu_{G'}(x)$, and $c(y,t) = \mu_{G'}(y)$, for $x \in X$
and $y\in Y$, and $c(x,y) = \infty$ if $xy \in E(G')$. All the
other capacities are $0$. Let $(S,T)$ be a minimum cut of this
network with capacity $c(S,T)$. By the structure of the flow
network, we have $c(S,T)\le \frac{1}{2}$. Now, let $X_1=S\cap X$,
$Y_1=S\cap Y$, $X_2=T\cap X$, and $Y_2=T\cap Y$. Since
$c(x,y)=\infty$ if $xy\in E(G')$, there is not any edge between
$X_1$ and $Y_2$. Therefore, $X_1\cup Y_2$ is an independent set in
$G'$. Since for all $Q \subseteq V(G)$, $\mu_G(Q) \le
\mu_G(N(Q))$, we have $\mu_{G'}(X_1)\le\mu_{G'}(N(X_1))$ and
$\mu_{G'}(Y_2)\le\mu_{G'}(N(Y_2))$, which yields $\mu_{G'}(X_1) +
\mu_{G'}(Y_2) \le \mu_{G'}(N(X_1))+ \mu_{G'}(N(Y_2))$. Thus, we
obtain $\mu_{G'}(X_1)+\mu_{G'}(Y_2)\le \frac{1}{2}$. Therefore, we
have $\mu_{G'}(X_2)+\mu_{G'}(Y_1)\ge \frac{1}{2}$ and because
$c(S,T)=\mu_{G'}(X_2)+\mu_{G'}(Y_1)$, we obtain
$c(S,T)=\frac{1}{2}$. Thus by the max-flow min-cut theorem, the
value of a maximum flow $f$ must be equal to $\frac{1}{2}$.

Now by using the maximum flow $f$, we construct a descriptor graph
$H$ for $G'$ together with the measure preserving homomorphism
$h:H \rightarrow G'$ as follows. The vertices of $H$ are the
elements of the interval $[0,1)$ endowed with the (uniform)
Lebesgue measure, and $E(H)=\{\{a,a+\frac{1}{2}\}: a \in
[0,\frac{1}{2})\}$. It is easy to see that $H$ is vertex
transitive. Now we have to specify $h$. For $xy \in E(G')$, let
$f_{xy}$ denote the amount of the flow that passes through this
edge. Since the value of $f$ is equal to $\frac{1}{2}$, we have
$\sum_{xy \in E(G')} f_{xy}=\frac{1}{2}$. So it is possible to
partition the interval $[0,\frac{1}{2})$ into \emph{disjoint}
intervals in the following way: $[0,\frac{1}{2})= \bigcup_{xy\in
E(G')} [a_{xy},a_{xy}+f_{xy})$, where $a_{xy} \ge 0$. Now $h$ is
defined as for every $z \in V(G')=X \cup Y$:

$$h^{-1}(z) = \left\{
\begin{array}{lcl}
   \bigcup_{y:\;zy\in E(G')} [a_{zy},a_{zy}+f_{zy}) &\ &\mbox{if $z \in X$} \\
   \bigcup_{x:\;xz\in E(G')} [\frac{1}{2}+a_{xz},\frac{1}{2}+a_{xz}+f_{xz}) && \mbox{if $z \in
   Y$}
\end{array}\right. $$

It is not hard to see that $h$ is a measure preserving
homomorphism from $H$ to $G'$. Since $G' \rightarrow G$, $H$ is a
descriptor of $G$. Hence, we have $\mathrm{u}(G)\le\frac{1}{2}$.
\end{proof}
\begin{lemma}\label{lem:sufficiancy}
For every finite graph $G$, if there exists an independent set
$I\subseteq V(G)$ such that $\mu_G(I) > \mu_G(N(I))$, then $\lim
\overline{\alpha}(G^n) = 1$.
\end{lemma}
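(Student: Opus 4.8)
The plan is to exhibit, for some tensor power $G^m$, an independent set whose measure exceeds $\tfrac12$, and then invoke Lemma~\ref{lem:basic}. Concretely, suppose $I \subseteq V(G)$ is independent with $\mu_G(I) > \mu_G(N(I))$. Put $A = I$, $B = N(I)$, $C = V(G) \setminus (A \cup B)$, and write $a = \mu_G(A)$, $b = \mu_G(B)$, $c = \mu_G(C)$, so $a + b + c = 1$ and $a > b$. The key structural facts are: $A$ is independent (no $A$--$A$ edges); every neighbor of a vertex in $A$ lies in $B$ (so there are no $A$--$C$ edges); and $N(A) \subseteq B$. These will let me control which coordinate patterns in $V(G)^m$ are guaranteed to be independent.

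First I would analyze a single tensor power of small order and describe a candidate independent set in $G^m$ by a condition on the ``profile'' of a vertex $(v_1,\dots,v_m)$, i.e.\ on how many coordinates fall in $A$, in $B$, in $C$. The natural choice is something like: a vertex is in $J$ if strictly more than half of its coordinates lie in $A$. To see $J$ is independent, take adjacent vertices $(v_i)$ and $(v_i')$; then $v_i v_i' \in E(G)$ for every $i$. If $v_i \in A$ then, since $A$ is independent and $N(A)\subseteq B$, we must have $v_i' \in B$, hence $v_i' \notin A$. So the $A$-coordinates of the two vertices are disjoint as index sets, and they cannot both have more than $m/2$ coordinates in $A$. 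Thus $J$ is independent. (One should double-check the boundary/tie cases, possibly by using odd $m$ or by a strict-majority convention, exactly as in the proof of Lemma~\ref{lem:basic}.)

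Next I would estimate $\mu_{G^m}(J)$. Let $X_i$ be the indicator that the $i$-th coordinate lies in $A$; these are i.i.d.\ with mean $a$. Then $\mu_{G^m}(J) = \Pr\bigl[\sum_i X_i > m/2\bigr]$. Here is the one subtlety: this probability tends to $1$ only if $a > \tfrac12$, which we are \emph{not} assuming --- we only know $a > b$. So the raw majority-in-$A$ set is not enough. The fix is to weight the coordinates: since $a > b$, there is some $\lambda \in (0,1)$ with $\lambda a > \lambda b$ trivially, but more usefully we can compare $A$ against $B$ directly. Redefine $J$ to be the set of vertices with (number of $A$-coordinates) $>$ (number of $B$-coordinates). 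The independence argument still works: if $(v_i) \sim (v_i')$, every coordinate of $(v_i)$ that lies in $A$ forces the corresponding coordinate of $(v_i')$ to lie in $B$; hence $\#\{i: v_i' \in B\} \ge \#\{i : v_i \in A\}$, and symmetrically $\#\{i : v_i \in B\} \ge \#\{i : v_i' \in A\}$. If both vertices were in $J$ we would get $\#\{i:v_i\in A\} > \#\{i : v_i \in B\} \ge \#\{i : v_i' \in A\} > \#\{i : v_i' \in B\} \ge \#\{i : v_i \in A\}$, a contradiction. So this $J$ is independent. And now $\mu_{G^m}(J) = \Pr\bigl[\sum_i \mathbf{1}[v_i \in A] > \sum_i \mathbf{1}[v_i\in B]\bigr]$; since each coordinate independently lands in $A$ with probability $a$ and in $B$ with probability $b < a$, the difference $\sum_i(\mathbf{1}[v_i\in A] - \mathbf{1}[v_i\in B])$ has positive mean $m(a-b)$ and finite variance, so by the weak law of large numbers this probability tends to $1$ as $m \to \infty$.

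Putting it together: for $m$ large enough, $\overline{\alpha}(G^m) \ge \mu_{G^m}(J) > \tfrac12$, so $\lim_n \overline{\alpha}(G^n) = \lim_n \overline{\alpha}((G^m)^n) > \tfrac12$ (the limit over powers of $G$ equals the limit over powers of $G^m$), and Lemma~\ref{lem:basic} upgrades this to $\lim_n \overline{\alpha}(G^n) = 1$. The main obstacle is exactly the point flagged above: the naive ``majority in $I$'' set has measure bounded away from $1$ when $\mu_G(I) \le \tfrac12$, so one must instead use the head-to-head comparison of $I$ against $N(I)$, and then verify carefully that the resulting threshold set is genuinely independent in every tensor power --- the rest is a routine concentration estimate.
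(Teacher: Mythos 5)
Your proof is correct, but it takes a genuinely different route from the paper. The paper argues recursively: it takes the union of the vertices whose first coordinate is in $I$ with the vertices whose first coordinate is in $U = V(G)\setminus(I\cup N(I))$ and whose remaining coordinates form a maximum-measure independent set of $G^{n-1}$; this yields $m_n \ge \mu_G(I) + \mu_G(U)\,m_{n-1}$, and summing the geometric series gives $\lim m_n \ge \mu_G(I)/(\mu_G(I)+\mu_G(N(I))) > \tfrac12$, after which Lemma~\ref{lem:basic} finishes. You instead build a single explicit set $J \subseteq V(G^m)$ of vertices having strictly more coordinates in $I$ than in $N(I)$; your verification of its independence (each $I$-coordinate of one endpoint forces the corresponding coordinate of the other endpoint into $N(I)$, so the two chains of inequalities collide) is sound, and the law of large numbers applied to the i.i.d.\ differences $\mathbf{1}[v_i\in I]-\mathbf{1}[v_i\in N(I)]$, which have positive mean $\mu_G(I)-\mu_G(N(I))$, gives $\mu_{G^m}(J)\to 1$. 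You were also right to flag and repair the failure of the naive majority-in-$I$ set when $\mu_G(I)\le\tfrac12$; the head-to-head comparison against $N(I)$ is exactly the needed fix. Note that your argument is in fact slightly stronger than you claim: since $\mu_{G^m}(J)\to 1$ and $\overline{\alpha}(G^n)$ is nondecreasing, you obtain $\lim\overline{\alpha}(G^n)=1$ directly, so your final appeal to Lemma~\ref{lem:basic} is not actually needed. What each approach buys: the paper's recursion is shorter and leans on Lemma~\ref{lem:basic} to do the concentration work once; yours produces an explicit independent set of measure tending to $1$ in a single power and is self-contained, at the cost of redoing a concentration estimate that Lemma~\ref{lem:basic} already encapsulates.
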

\begin{proof}
Let $U = V(G) \backslash (I \cup N(I))$. Let $m_n =
\overline{\alpha}(G^n)$. Trivially, $\mu_G(I)+\mu_G(N(I))+\mu_G(U)
= 1$, $m_1 \ge \mu_G(I)$ and $\mu_G(U)<1$. Consider the union of
the vertices with first coordinate in $I$ and the vertices with
first coordinate in $U$ and last $n-1$ coordinates in the maximum
measure independent set of $G^{n-1}$. It can be seen that this is
an independent set and we have $m_n \ge \mu_G(I) +
\mu_G(U)m_{n-1}$. By applying this inequality repeatedly, we
obtain:
\begin{equation*}\begin{split}
m_n&\ge \mu_G(I)+\mu_G(I)\mu_G(U)+...+\mu_G(U)^{n-1}.m_1  \\
&\ge \mu_G(I)+\mu_G(I)\mu_G(U)+...+\mu_G(I)\mu_G(U)^{n-1} =
\frac{\mu_G(I)-\mu_G(I)\mu_G(U)^n}{1-\mu_G(U)}
\end{split}\end{equation*}
Thus, we have $\lim_{n\rightarrow \infty} m_n \ge
\frac{\mu_G(I)}{1-\mu_G(U)}=
\frac{\mu_G(I)}{\mu_G(I)+\mu_G(N(I))}>\frac{1}{2}$, and by
Lemma~\ref{lem:basic}, we have $\lim \overline{\alpha}(G^n)=1$.
\end{proof}
\begin{theorem}\label{thm:main}
For every finite graph $G$, the followings are equivalent:
\begin{itemize}
\item[(i)] $\lim \overline{\alpha}(G^n) = 1$;

\item[(ii)] $\mathrm{u}(G)=1$;

\item[(iii)] there exists an independent set $I\subseteq V(G)$
such that $\mu_G(I) > \mu_G(N(I))$.
\end{itemize}
\end{theorem}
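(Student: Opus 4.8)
The plan is to close the cycle of implications (iii)~$\Rightarrow$~(i)~$\Rightarrow$~(ii)~$\Rightarrow$~(iii), since each of these single arrows is essentially already supplied by one of the three lemmas preceding the theorem; the ``proof'' is mainly the bookkeeping of stitching them together.

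First, (iii)~$\Rightarrow$~(i) is exactly Lemma~\ref{lem:sufficiancy}: the existence of an independent set $I$ with $\mu_G(I) > \mu_G(N(I))$ forces $\lim_n \overline{\alpha}(G^n) = 1$.

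Next, for (i)~$\Rightarrow$~(ii) I would combine two observations. On the one hand, the trivial inequality~\eqref{ine:main} gives $\lim_n \overline{\alpha}(G^n) \le \mathrm{u}(G)$, so under assumption (i) we get $\mathrm{u}(G) \ge 1$. On the other hand, $\mathrm{u}(G) \le 1$ always holds: every descriptor $H$ has $\overline{\alpha}(H) \le 1$ since $\mu_H$ is a probability measure, and the family of descriptors of $G$ is nonempty --- for instance the edgeless graph on $[0,1)$ with Lebesgue measure is vertex transitive (witnessed by the rotations $v \mapsto v + t \bmod 1$, $t$ uniform in $[0,1)$) and admits a measure preserving homomorphism to $G$, the homomorphism condition being vacuous as the edgeless graph has no edges; alternatively one may invoke the descriptor built inside the proof of Lemma~\ref{lem:necessity}. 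Hence $\mathrm{u}(G) = 1$.

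Finally, (ii)~$\Rightarrow$~(iii) is precisely Lemma~\ref{lem:necessity}. Chaining the three implications gives the equivalence of (i), (ii) and (iii), completing the proof. My honest assessment is that there is no genuine obstacle at this stage: all of the substance was already expended in Lemmas~\ref{lem:basic}, \ref{lem:necessity} and~\ref{lem:sufficiancy}, the max-flow/min-cut construction of a descriptor in Lemma~\ref{lem:necessity} being the technical heart of the whole development. The only point worth a second's care is checking that the infimum defining $\mathrm{u}(G)$ is taken over a nonempty family, so that the bound ``$\mathrm{u}(G) \le 1$'' is legitimate; this is immediate from the edgeless-graph remark above.
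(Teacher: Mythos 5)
Your proposal is correct and follows essentially the same route as the paper: the same cyclic chain (i)$\Rightarrow$(ii)$\Rightarrow$(iii)$\Rightarrow$(i) using inequality~(\ref{ine:main}), Lemma~\ref{lem:necessity}, and Lemma~\ref{lem:sufficiancy} respectively. Your extra remark that $\mathrm{u}(G)\le 1$ (via the nonemptiness of the family of descriptors, e.g.\ the edgeless vertex transitive graph on $[0,1)$) is a small point the paper leaves implicit, and it is handled correctly.
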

\begin{proof}
(i) implies (ii) by the inequality (\ref{ine:main}), (ii) implies
(iii) by Lemma~\ref{lem:necessity}, and (iii) implies (i) by
Lemma~\ref{lem:sufficiancy}.
\end{proof}
\begin{corollary}\label{cor:main}
For every finite graph $G$, if $\lim \overline{\alpha}(G^n) \in
\{\frac{1}{2},1\}$ then $\lim
\overline{\alpha}(G^n)=\mathrm{u}(G)$.
\end{corollary}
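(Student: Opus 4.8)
The plan is to treat separately the two possibilities $\lim\overline{\alpha}(G^n)=1$ and $\lim\overline{\alpha}(G^n)=\frac{1}{2}$ permitted by the hypothesis, and in each case to pin down $\mathrm{u}(G)$ exactly.

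When $\lim\overline{\alpha}(G^n)=1$ there is nothing to do beyond quoting Theorem~\ref{thm:main}: its statements (i) and (ii) are equivalent, so $\lim\overline{\alpha}(G^n)=1$ already forces $\mathrm{u}(G)=1$, and the two numbers coincide.

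When $\lim\overline{\alpha}(G^n)=\frac{1}{2}$, inequality~(\ref{ine:main}) gives $\mathrm{u}(G)\ge\frac{1}{2}$, so it remains to prove $\mathrm{u}(G)\le\frac{1}{2}$. Since $\frac{1}{2}\neq1$, statement (i) of Theorem~\ref{thm:main} fails, hence so does (iii); that is, every independent set $I\subseteq V(G)$ satisfies $\mu_G(I)\le\mu_G(N(I))$. But this is exactly the hypothesis under which the argument in the proof of Lemma~\ref{lem:necessity} runs, and that argument exhibits a vertex transitive descriptor $H$ of $G$ --- the graph on $[0,1)$ with Lebesgue measure and edge set $\{\{a,a+\frac{1}{2}\}:a\in[0,\frac{1}{2})\}$ --- for which $\overline{\alpha}(H)=\frac{1}{2}$, since an independent set of $H$ can contain at most one endpoint of each pair $\{a,a+\frac{1}{2}\}$. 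Therefore $\mathrm{u}(G)\le\overline{\alpha}(H)=\frac{1}{2}$, and together with the reverse inequality this yields $\mathrm{u}(G)=\frac{1}{2}=\lim\overline{\alpha}(G^n)$.

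The only delicate point is that the \emph{statement} of Lemma~\ref{lem:necessity} records merely the qualitative conclusion $\mathrm{u}(G)\neq1$, whereas the case $\lim\overline{\alpha}(G^n)=\frac{1}{2}$ needs the sharp bound $\mathrm{u}(G)\le\frac{1}{2}$. This sharper bound is in fact precisely what the proof of Lemma~\ref{lem:necessity} establishes --- the descriptor it constructs is a continuous perfect matching, of independence ratio $\frac{1}{2}$ --- so no additional argument is needed; alternatively, one may restate Lemma~\ref{lem:necessity} with the stronger conclusion ``$\mathrm{u}(G)\le\frac{1}{2}$'', after which the corollary becomes a purely formal consequence of Theorem~\ref{thm:main} and~(\ref{ine:main}).
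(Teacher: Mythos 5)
Your proof is correct and is essentially the argument the paper intends: the case $\lim\overline{\alpha}(G^n)=1$ is immediate from the equivalence of (i) and (ii) in Theorem~\ref{thm:main}, and the case $\lim\overline{\alpha}(G^n)=\frac{1}{2}$ combines inequality~(\ref{ine:main}) with the bound $\mathrm{u}(G)\le\frac{1}{2}$ that the proof of Lemma~\ref{lem:necessity} explicitly establishes in its final sentence via the continuous-matching descriptor. Your observation that one must invoke the \emph{proof} of Lemma~\ref{lem:necessity} rather than its weaker stated conclusion is exactly the right point to flag.
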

\begin{remarka}
It is not hard to see that for graphs with rational measures,
Theorem~\ref{thm:main}(i) directly yields
Theorem~\ref{thm:main}(iii). Noga Alon showed us that a density
argument can be used to generalize this to graphs with real
measures~\cite{alon06b}. But, since we are mainly interested in
$\mathrm{u}(G)$, we do not state his proof here.
\end{remarka}

Corollary~\ref{cor:main} presents a family of graphs for which
equality holds in Question~\ref{que:main}. In the next
proposition, we show that this family contains bipartite graphs.
Trivially, finite vertex transitive graphs are another family of
graphs for which equality holds in Question~\ref{que:main}.
\begin{proposition}\label{prp:bipartite}
For a finite bipartite graph $G$, we have $\lim
\overline{\alpha}(G^n)\in\{\frac{1}{2},1\}$.
\end{proposition}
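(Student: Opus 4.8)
The plan is to deduce the dichotomy directly from Lemma~\ref{lem:basic} together with the elementary observation that a finite bipartite graph already satisfies $\overline{\alpha}(G)\ge\frac12$. First I would fix a proper $2$-colouring of $G$ and let $A$ and $B$ be the two colour classes (isolated vertices, if any, may be assigned to either class arbitrarily). Then $A$ and $B$ are independent sets and $\{A,B\}$ is a partition of $V(G)$, so $\mu_G(A)+\mu_G(B)=1$; hence at least one of them, say $A$, satisfies $\mu_G(A)\ge\frac12$, and therefore $\overline{\alpha}(G)\ge\mu_G(A)\ge\frac12$.

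Next I would invoke the monotonicity recorded in the introduction: since $G^{i+1}\rightarrow G^i$, the sequence $\overline{\alpha}(G^n)$ is nondecreasing and its limit exists, so $\lim_{n\rightarrow\infty}\overline{\alpha}(G^n)\ge\overline{\alpha}(G^1)=\overline{\alpha}(G)\ge\frac12$.

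Finally I would split into two cases according to Lemma~\ref{lem:basic}. If $\lim\overline{\alpha}(G^n)>\frac12$, then Lemma~\ref{lem:basic} yields $\lim\overline{\alpha}(G^n)=1$. Otherwise $\lim\overline{\alpha}(G^n)\le\frac12$, which combined with the lower bound from the previous step forces $\lim\overline{\alpha}(G^n)=\frac12$. In either case $\lim\overline{\alpha}(G^n)\in\{\frac12,1\}$, as claimed.

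I expect essentially no hard step here: the entire content is the bound $\overline{\alpha}(G)\ge\frac12$, which is immediate for bipartite graphs, plus the already-established Lemma~\ref{lem:basic}. The only mild point worth stating carefully is that a finite bipartite graph genuinely admits a partition of its vertex set into two independent sets (this is just the definition of bipartiteness), together with the fact that the normalisation $\mu_G(V(G))=1$ is precisely what makes one of the two classes have measure at least $\frac12$.
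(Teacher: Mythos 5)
Your proposal is correct and follows essentially the same route as the paper: establish the lower bound $\overline{\alpha}\ge\frac12$ from the bipartition (the paper does this for each $G^n$ directly, you do it for $G$ and invoke the monotonicity of $\overline{\alpha}(G^n)$ noted in the introduction) and then apply Lemma~\ref{lem:basic} to conclude the dichotomy.
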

\begin{proof}
Let $X$ and $Y$ be a bipartition of $G$. The set of the vertices
of $G^n$ whose first coordinates are in $X$ and the set of the
vertices of $G^n$ whose first coordinates are in $Y$ is a
bipartition of $G^n$. Thus, for the bipartite graph $G^n$,
$\overline{\alpha}(G^n)\ge \frac{1}{2}$. Therefore, by
Lemma~\ref{lem:basic}, we obtain $\lim \overline{\alpha}(G^n) \in
\{\frac{1}{2},1\}$.
\end{proof}

\large
\parskip = 5 mm
\noindent\textbf{Acknowledgements}

\normalsize \small We are grateful to Noga Alon for reading one of
the first drafts of this paper and pointing out a direct proof to
show that Theorem~\ref{thm:main}(i) yields
Theorem~\ref{thm:main}(iii). In addition, we appreciate Hamed
Hatami for his invaluable comments and helps through preparation
of this paper.
\parskip = 1 mm

\bibliographystyle{plain}
\bibliography{references}

\begin{thebibliography}{1}

\bibitem{alon06b}
N.~Alon.
\newblock Private communication, 2006.

\bibitem{alon04}
N.~Alon, I.~Dinur, E.~Friedgut, and B.~Sudakov.
\newblock Graph products, {F}ourier analysis and spectral techniques.
\newblock {\em Geometric and Functional Analysis}, 14:913--940, 2004.

\bibitem{alon06}
N.~Alon and E.~Lubetzky.
\newblock Independent sets in tensor graph powers.
\newblock {\em J. Graph Theory}, 2006.
\newblock To appear.

\bibitem{brown96}
J.I. Brown, R.J. Nowakowski, and D.~Rall.
\newblock The ultimate categorical independence ratio of a graph.
\newblock {\em SIAM J. Discrete Math.}, 9:290--300, 1996.

\bibitem{friedgut05}
E.~Friedgut and I.~Dinur.
\newblock Proof of an intersection theorem via graph homomorphisms.
\newblock {\em Electronic J. Combinatorics}, 13(1):N6, 2005.

\bibitem{zhu92}
X.~Zhu.
\newblock Star chromatic number and products of graphs.
\newblock {\em J. Graph Theory}, 16:557--569, 1992.

\end{thebibliography}

\end{document}